\tikzstyle{b}=[thick,decorate, decoration={brace,amplitude=5pt,mirror}, xshift=0.4pt,yshift=-0.4pt]
\tikzstyle{l}=[midway,yshift=-0.5cm]
\newtheorem{theorem}{Theorem}
\newtheorem{lemma}[theorem]{Lemma}
\newtheorem{cor}[theorem]{Corollary}
\newtheorem{prop}[theorem]{Proposition}
\theoremstyle{definition}
\renewcommand{\S}{\mathcal{S}}
\DeclareMathOperator{\des}{des}
\DeclareMathOperator{\Des}{Des}
\newcommand{\Inv}{\mathcal{I}}
\newcommand{\x}{\boldsymbol{x}}
\title{Involutions and the Gelfand character}
\author[1]{Kassie Archer\footnote{email: karcher@uttyler.edu}}
\author[2]{Virginia Germany}
\author[3]{C. Marin King}
\author[4]{L.-K. Lauderdale}
\date{}
\affil[1]{\footnotesize{Department of Mathematics, University of Texas at Tyler, Tyler, TX 75799}}
\affil[2]{\footnotesize{Mathematics Department, Kilgore College, Kilgore, TX 75662}}
\affil[3]{\footnotesize{Department of Mathematics, University of Missouri, Columbia, MO 65211}}
\affil[4]{\footnotesize{Department of Mathematics, Towson University, Towson, MD 21252}}
\begin{document}

\maketitle

\abstract{
The Gelfand representation of $\mathcal{S}_n$ is the multiplicity-free direct sum of the irreducible representations of $\mathcal{S}_n$. In this paper, we use a result of Adin, Postnikov, and Roichman to find a generating function for the Gelfand character. In order to find this generating function, we investigate descents of so-called $\lambda$-unimodal involutions.
}

\paragraph{Keywords:} involutions, enumeration, $\lambda$-unimodal permutations, descents, Gelfand character


\section{Introduction}

A permutation is \textit{unimodal} provided its one-line notation is increasing, then decreasing.  Given any composition $\lambda$ of the positive integer $n$, we say that a permutation is $\lambda$\textit{-unimodal} if it is comprised of contiguous unimodal segments whose lengths are determined by $\lambda$.  These $\lambda$-unimodal permutations (called $\mu$-unimodal in \cite{AR2015})  are the topic of research by numerous authors. They appeared in \cite{APR08, AR2015, AS18, A2015, ER2016, ER2014,R1998,R1997}, among others, as a tool to study characters of the symmetric group, Schur-positivity, other aspects of quasi-symmetric functions, and some enumerative applications; some of these applications are discussed in more detail below.  

In \cite{AR2015}, the authors describe several characters that can be written as the sum over a set of these permutations with a given property. Specifically, if $\mathcal{B}_n \subseteq \S_n$ is a so-called \textit{fine set}, then letting $\S^\lambda$ denote the set of $\lambda$-unimodal permutations and letting $\des_\lambda(\pi)$ denote the number of $\lambda$-descents of a permutation $\pi$ (defined in Section \ref{sec:background}), we have
$$
\chi_\lambda = \sum_{\pi\in\S^\lambda\cap\mathcal{B}_n} (-1)^{\des_\lambda(\pi)}
$$
for some character $\chi$ of the symmetric group. For example, if $\mathcal{B}_n=\S_n$, then $\chi$ is the regular representation and if $\mathcal{B}_n$ is the set of involutions in $\S_n$, $\chi$ is the Gelfand character. Other fine sets include conjugacy classes in $\S_n$ and their unions. 
In \cite{A2016}, the first author of this article investigated $\lambda$-unimodal cycles and their relationship to a specific induced character of the symmetric group using methods developed from studying the periodic orbits of certain dynamical systems. This proved to have some interesting applications to enumerative combinatorics, allowing us to recover a result that appeared in \cite{Elizalde11}, which states that the number of permutations in $\S_{n-1}$ with descent set $D \subseteq [n - 2]$ is equal to the number of cyclic permutations whose descent set is either $D$ or $D\cup \{n -1\}$. 

Though there is much literature involving $\lambda$-unimodal permutations, they are not often studied as purely combinatorial objects. The enumeration of these permutations with respect to certain statistics or with certain properties remains mostly open. However, their enumeration and relationship to other combinatorial objects has been shown to have interesting implications. For example, in \cite{A2015}, Athanasiadis used a relationship between these permutations and certain graphs in order to prove a conjecture of Shareshian and Wachs; this conjecture gives a refinement of Stanley's chromatic symmetric function for graphs. In \cite{AS18}, Alexandersson and Sulzgruber gave several results regarding $p$-positivity and combinatorial interpretations for coeffieicents of the power sum expansion of several families of quasi-symmetric functions using combinatorial results about $\lambda$-unimodal permutations. These applications gives us a reason to consider the enumeration of these permutations with respect to other properties and statistics.

In this paper, we investigate $\lambda$-unimodal involutions, i.e., those $\lambda$-unimodal permutations that are their own algebraic inverse, and we use them to compute the Gelfand character.
In  \cite{APR08, AR2015}, it is shown that these involutions have a direct relationship to the Gelfand character, $\chi^G$, which is the character associated to the representation of $\S_n$ obtained by taking the multiplicity-free direct sum of the irreducible representations of $\S_n$. For example, see \cite{APR08}. 
Specifically, if $\Inv^\lambda$ denotes the set of $\lambda$-unimodal involutions and $\des_\lambda(\pi)$ denotes the number of $\lambda$-descents of a permutation $\pi$ (defined in Section \ref{sec:background}), then 
\begin{equation}\label{eq:char}
\chi_\lambda^G = \sum_{\pi\in\Inv^\lambda}(-1)^{\des_\lambda(\pi)}.
\end{equation}


The bulk of this paper is dedicated to enumerating $\lambda$-unimodal involutions via a recursive generating function. This can be further refined to a generating function for the number of $\lambda$-unimodal involutions with a given number of $\lambda$-descents, which in turn gives a generating function for the Gelfand character (see Theorem \ref{main theorem 3} and Corollary \ref{cor:Gelfand}). This yields a new way of computing the Gelfand character different than the ones currently known; see \cite{IRS90, M13, M-1937, N1940, B2004, R1997}.
This also gives us an approach to address an open question in \cite{R2014}, in which Roichman comments on the desirability of combinatorial proofs to certain character formulas, such as Equation~\eqref{eq:char}. 

In addition, we provide a combinatorial proof for a formula for the character of the regular representation of $\S_n$. This formula appears in {\cite[Cor.\ 3.8]{AR2015}}. In Section \ref{sec: regular}, we prove this character formula by showing that for $\lambda \vDash n$,
\begin{equation}\label{eq:reg char}
\sum_{\pi\in\S^\lambda}(-1)^{\des_\lambda(\pi)} = \begin{cases}n! & \text{ if $\lambda= (1,1,\ldots,1)$} \\ 0 & \text{ otherwise,}\end{cases}
\end{equation}
which coincides with the regular representation.

\section{Background and Notation}\label{sec:background}

Let $\S_n$ be the set of permutations on $[n] = \{1,2,\ldots,n\}$, and write $\pi \in \S_n$ in its one-line notation as $\pi=\pi_1\pi_2\ldots\pi_n=\pi(1)\pi(2)\ldots \pi(n)$.  A permutation $\pi \in \S_n$ is \textit{unimodal} if there exists $i \in [n]$ such that 
	\[\pi_1<\pi_2<\cdots<\pi_{i-1}<\pi_i>\pi_{i+1}>\cdots>\pi_{n-1}>\pi_n;\]
that is, $\pi$ is increasing then decreasing. Similarly, any sequence or segment is unimodal if it is increasing, then decreasing.  A \textit{composition} of the integer $n$, denoted $\lambda \vDash n$,  is a sequence of positive integers $\lambda=(\lambda_1,\lambda_2, \ldots, \lambda_k)$ such that $\sum \lambda_i=n$.  Given a composition $\lambda=(\lambda_1,\lambda_2, \ldots, \lambda_k)\vDash n$, we say that $\pi \in \S_n$ is $\lambda$\textit{-unimodal} provided $\pi$ is composed of $k$ contiguous segments, where the $i$-th segment is unimodal of length $\lambda_i$ with $i\in[k]$.  For example, the permutation $\pi = 139654872 \in \S_9$ is $(5,4)$-unimodal because the first five entries $13965$ and the last four entries $4872$ both form unimodal segments of $\pi$; the pictorial representation of this permutation can be seen in Figure~\ref{fig:EXAMPLES}(a).  

The permutation $\pi \in \S_n$ has a \textit{descent} at position $i$ if $\pi_i>\pi_{i+1}$. The \textit{descent set} of $\pi$, denoted $\Des(\pi)$, is the set of descents of $\pi$ and the \textit{descent number} of $\pi$, denoted $\des(\pi)$, is the number of descents of $\pi$.  If $\lambda=(\lambda_1,\lambda_2, \ldots, \lambda_k) \vDash n$, we say that $i$ is a $\lambda$\textit{-descent} of $\pi$ if $i$ is a descent of $\pi$ that is within one of the segments corresponding to $\lambda$. In other words, we define the set of $\lambda$-descents of $\pi$, denoted $\Des_\lambda(\pi)$, to be the set $$\Des_\lambda(\pi)=\Des(\pi)\setminus\{\lambda_1, \lambda_1+\lambda_2, \ldots, \lambda_1+\lambda_2+\cdots+\lambda_{k-1}\}.$$ 
We let $\des_\lambda(\pi)$ denote the number of $\lambda$-descents of $\pi$.  For example, if $\lambda=(5,4)$ and $\pi = 129654873 \in \S_9$, then we have $\des(\pi)=5$ and $\des_\lambda(\pi)=4$. In this example, the descent at position 5 is not a $\lambda$-descent.  

We say $\pi \in \S_n$ is an \textit{involution} if it is its own inverse, i.e.,  $\pi=\pi^{-1}$.  Equivalently, every involution is comprised of only disjoint transpositions and fixed points, and in its pictorial representation, every involution is symmetric about the diagonal.  The $(4,3,2)$-unimodal involution $476183259 \in \S_9$ is depicted in Figure~\ref{fig:EXAMPLES}(b).
Additionally, let $\S^\lambda$ denote the set of $\lambda$-unimodal permutations, and let $\Inv^\lambda$ denote the set of $\lambda$-unimodal involutions. For example, $139654872\in S^{(5,4)}$ and $476183259 \in \Inv^{(4,3,2)}$.

  	\begin{figure}[H]
	\centering
		\begin{tabular}{ccc}
			\subcaptionbox{$(5,4)$-unimodal permutation $\pi = 139654872 \in \S_9$}{
			\begin{tikzpicture}[scale=.5]
				\newcommand\myx[1][(0,0)]{\pic at #1 {myx};}
				\tikzset{myx/.pic = {\draw 
				      (-2.5mm,-2.5mm) -- (2.5mm,2.5mm)
				      (-2.5mm,2.5mm) -- (2.5mm,-2.5mm);}}

				\draw[gray] (0,0) grid (9,9);
				\draw[thick] (5,0) -- (5,9);
				\foreach \x/\y in {1/1,2/3,3/9,4/6,5/5,6/4,7/8,8/7,9/2}
				\myx[(\x-.5,\y-.5)];
				
			\end{tikzpicture}} & \hspace{1cm} &

			\subcaptionbox{$(4,3,2)$-unimodal involution $\pi=476183259 \in \S_9$}{
			\begin{tikzpicture}[scale=.5]
				\newcommand\myx[1][(0,0)]{\pic at #1 {myx};}
				\tikzset{myx/.pic = {\draw 
 				     (-2.5mm,-2.5mm) -- (2.5mm,2.5mm)
 				     (-2.5mm,2.5mm) -- (2.5mm,-2.5mm);}}

				\draw[gray] (0,0) grid (9,9);
				\foreach \x/\y in {1/4,2/7,3/6,4/1,5/8,6/3,7/2,8/5,9/9}
				\myx[(\x-.5,\y-.5)];
				\draw[-, dashed] (0,0) -- (9,9);
				\draw[thick] (4,0) -- (4,9) (7,0) -- (7,9);
			\end{tikzpicture}}
		
		\end{tabular}
		\caption{ Pictorial representations of $\lambda$-unimodal permutations.}
		\label{fig:EXAMPLES}
	\end{figure} 
	
	Finally, we say a sequence of distinct positive integers $\gamma= \gamma_1\gamma_2\ldots\gamma_n$ is \textit{order-isomorphic} to a permutation $\pi \in \S_n$ if $\gamma_i<\gamma_j$ if and only if $\pi_i<\pi_j$. For example, the sequence $352$ is order-isomorphic to the permutation $231$ and the sequence $942185$ is order-isomorphic to the permutation $632154$.

\section{Character of the regular representation of $\S_n$}\label{sec: regular}
	
Let $\chi^R$ denote the character of the regular representation of $\S_n$, and let $\chi_\lambda^R$ denote the value that this representation takes on conjugacy class $\lambda$. The following theorem appears as Corollary 3.8 in \cite{AR2015}.
\begin{theorem}[{\cite[Cor. 3.8]{AR2015}}]\label{regular rep}
For $n\geq 1$ and $\lambda \vDash n$, 
$$\sum_{\pi\in\S^\lambda} (-1)^{\des_\lambda(\pi)} = \chi^R_\lambda.$$
\end{theorem}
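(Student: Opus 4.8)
The plan is to prove the identity by a bijective decomposition of $\lambda$-unimodal permutations that makes the signed sum factor over the parts of $\lambda$. First I would recall the standard fact that the regular character satisfies $\chi^R_\lambda = n!$ when $\lambda$ is the all-ones composition $(1,1,\ldots,1)$ and $\chi^R_\lambda = 0$ otherwise: the regular representation has character $|\S_n|$ on the identity and $0$ on every other conjugacy class, and a composition $\lambda \vDash n$ indexes the identity class exactly when all of its parts equal $1$. It therefore suffices to show that $\sum_{\pi \in \S^\lambda}(-1)^{\des_\lambda(\pi)}$ equals $n!$ when $\lambda = (1,\ldots,1)$ and vanishes otherwise.

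Write $\lambda = (\lambda_1,\ldots,\lambda_k)$ and let $B_1,\ldots,B_k$ denote the corresponding blocks of consecutive positions, $B_j$ having length $\lambda_j$. The key observation is that specifying a permutation $\pi\in\S^\lambda$ is the same as specifying an ordered set partition $(T_1,\ldots,T_k)$ of $[n]$ with $|T_j| = \lambda_j$ (the set of values placed in block $B_j$), together with, for each $j$, a subset $D_j \subseteq T_j \setminus \{\max T_j\}$ (the values of the $j$-th block lying strictly to the right of its peak). This is a bijection because in a unimodal segment the peak is forced to be the largest value, and the segment is then reconstructed by placing the chosen subset $D_j$ in decreasing order after the peak and the remaining non-peak values in increasing order before it. Next I would verify that under this correspondence $\des_\lambda(\pi) = \sum_{j=1}^k |D_j|$: the inter-block boundary positions $\lambda_1, \lambda_1+\lambda_2, \ldots, \lambda_1+\cdots+\lambda_{k-1}$ are deleted from $\Des(\pi)$ by the very definition of $\Des_\lambda$, while inside block $B_j$ the descents occur precisely along the decreasing run that follows the peak, of which there are exactly $|D_j|$ (this holds whether $D_j$ is empty or not).

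Granting these two facts, the signed sum factors:
\[
\sum_{\pi \in \S^\lambda}(-1)^{\des_\lambda(\pi)} \;=\; \sum_{(T_1,\ldots,T_k)} \;\prod_{j=1}^k \Bigg(\sum_{D_j \subseteq T_j\setminus\{\max T_j\}} (-1)^{|D_j|}\Bigg) \;=\; \sum_{(T_1,\ldots,T_k)} \;\prod_{j=1}^k (1-1)^{\lambda_j-1},
\]
the last step being the binomial theorem applied to the $\lambda_j-1$ non-maximal elements of $T_j$. If some $\lambda_j \geq 2$, every summand is $0$ and the total is $0$. If $\lambda = (1,\ldots,1)$, every factor equals $(1-1)^0 = 1$, the number of ordered set partitions of $[n]$ into singletons is $n!$, and the sum is $n!$. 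Comparing with the values of $\chi^R_\lambda$ recalled at the outset finishes the proof.

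I do not expect a serious obstacle here. The only points needing care are the verification that the peak of a unimodal segment really is its maximum (so the stated correspondence is a genuine bijection) and the bookkeeping that every within-block descent is accounted for by $D_j$ while no block-boundary descent is — in particular treating the degenerate cases $D_j = \emptyset$ and $\lambda_j = 1$ consistently, and using the convention $0^0 = 1$. A sign-reversing-involution argument (toggle the largest non-peak element of a fixed long block across the peak) would give an alternative route, but the factorization above is the cleanest.
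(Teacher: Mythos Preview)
Your argument is correct and is essentially the paper's own proof: the bijection you set up is exactly the content of Proposition~\ref{enumerate lambda descents}, and your binomial-theorem vanishing is the same alternating-sum step the paper uses. The only cosmetic difference is that you factor the sum block-by-block as $\prod_j (1-1)^{\lambda_j-1}$, whereas the paper pools all non-peak elements into a single set of size $n-k$ and applies $\sum_d (-1)^d\binom{n-k}{d}=0$ once.
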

In this section, we provide a combinatorial proof of this theorem using the following proposition concerning $\lambda$-unimodal permutations and $\lambda$-descents. If $\lambda =(\lambda_1, \lambda_2, \ldots, \lambda_k)$, let  ${n \choose \lambda}$ denote the multinomial coefficient given by
\[{n \choose \lambda}=\frac{n!}{\lambda_1!\lambda_2!\cdots\lambda_k!}.\]

	\begin{prop}\label{enumerate lambda descents}
The number of $\lambda$-unimodal permutations in $\S_n$ with $d$ $\lambda$-descents is
		\[{n \choose \lambda}{n-k \choose d},\] where $\lambda =(\lambda_1, \lambda_2, \ldots, \lambda_k)$.
\end{prop}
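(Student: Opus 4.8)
The plan is to count $\lambda$-unimodal permutations of $[n]$ with exactly $d$ $\lambda$-descents by first choosing which values land in each of the $k$ segments, and then counting the unimodal arrangements of each segment by their internal descent number. Write $\lambda=(\lambda_1,\dots,\lambda_k)$. A $\lambda$-unimodal permutation is determined by (i) an ordered set partition of $[n]$ into blocks $B_1,\dots,B_k$ with $|B_i|=\lambda_i$, giving the values occupying positions $1,\dots,\lambda_1$, then $\lambda_1+1,\dots,\lambda_1+\lambda_2$, and so on, and (ii) for each $i$, a choice of unimodal ordering of the $\lambda_i$ values in $B_i$. The number of choices in (i) is the multinomial coefficient $\binom{n}{\lambda}$. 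Crucially, the number of unimodal orderings of a fixed $\lambda_i$-element set is $2^{\lambda_i-1}$ (once the maximum is fixed at the peak, each remaining element goes on the ascending or descending side), and — this is the combinatorial heart — a unimodal sequence of length $m$ has exactly $m-1$ ``slots'' (the gaps between consecutive entries), each of which is independently an ascent or a descent, with the descending slots forming its descent set. Hence the number of unimodal orderings of an $m$-set with exactly $j$ internal descents is $\binom{m-1}{j}$.

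Next I would assemble these pieces. Since the $\lambda$-descents of $\pi$ are exactly the descents internal to the segments (the segment-boundary positions $\lambda_1,\lambda_1+\lambda_2,\dots$ are excluded by definition), we have $\des_\lambda(\pi)=\sum_{i=1}^k j_i$, where $j_i$ is the number of descents in segment $i$. Therefore the number of $\lambda$-unimodal permutations with $d$ $\lambda$-descents equals
\[
\binom{n}{\lambda}\sum_{j_1+\cdots+j_k=d}\;\prod_{i=1}^k\binom{\lambda_i-1}{j_i},
\]
because the block-assignment step contributes $\binom{n}{\lambda}$ independently of the descent counts, and within each block the ordering choices depend only on the size $\lambda_i$, not on which values are in the block. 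The remaining sum is a Vandermonde-type convolution: $\sum_{j_1+\cdots+j_k=d}\prod_i\binom{\lambda_i-1}{j_i}=\binom{(\lambda_1-1)+\cdots+(\lambda_k-1)}{d}=\binom{n-k}{d}$, since $\sum\lambda_i=n$ and there are $k$ parts. This gives the claimed formula $\binom{n}{\lambda}\binom{n-k}{d}$.

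**Main obstacle.** The only real content to nail down carefully is the claim that a unimodal sequence of length $m$ on a fixed set has exactly $\binom{m-1}{j}$ arrangements with $j$ internal descents — equivalently, that the ``ascent/descent pattern'' map from unimodal orderings of an $m$-set to subsets of $\{1,\dots,m-1\}$ is a bijection. One direction is immediate (a unimodal word of length $m$ has a well-defined descent set of some size $j$); for the other, given a target descent set $S\subseteq\{1,\dots,m-1\}$ one reconstructs the unique unimodal word by placing the largest element at position $\max(S)+1$ (or at position $1$ if $S=\emptyset$), the elements to its left in increasing order and to its right in decreasing order, and then checking the descent set is exactly $S$. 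I would state this as a short lemma (or inline observation) and verify it in a sentence or two. Everything else — the block decomposition, the identification of $\lambda$-descents with internal descents, and the Vandermonde convolution — is routine and can be dispatched quickly once that lemma is in hand.
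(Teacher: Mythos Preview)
Your overall strategy is correct and close to the paper's, but the justification you give in the ``main obstacle'' paragraph is wrong and would not survive scrutiny. It is \emph{not} true that in a unimodal word the $m-1$ gaps are independently ascents or descents: once a descent occurs, every subsequent gap must also be a descent, so the descent set of a unimodal word is always a terminal interval $\{p,p+1,\dots,m-1\}$. Consequently the map from unimodal orderings of an $m$-set to subsets of $\{1,\dots,m-1\}$ is neither injective (for $m=3$, both $132$ and $231$ have descent set $\{2\}$) nor surjective (no unimodal word of length $3$ has descent set $\{1\}$), and your proposed reconstruction from an arbitrary $S$ does not produce a word with descent set $S$.

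The correct reason there are $\binom{m-1}{j}$ unimodal orderings of an $m$-set with exactly $j$ descents is the one you already stated earlier in your plan: the maximum is forced at the peak, and each of the remaining $m-1$ elements independently goes to the ascending (left) side or the descending (right) side; the number of descents then equals the number of elements placed on the right. Choosing which $j$ of those $m-1$ elements go right gives the count. With this fix your argument is complete.

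The paper's proof uses exactly this observation but applies it globally rather than block by block: after partitioning $[n]$ into the $k$ blocks, one simply chooses which $d$ of the $n-k$ non-maximal elements lie to the right of their block's maximum. This yields $\binom{n-k}{d}$ directly and sidesteps your Vandermonde convolution.
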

\begin{proof}
The multinomial coefficient ${n \choose \lambda}$ counts the partitions of $[n]$ into $k$ parts, the $i$-th of which is of size $\lambda_i$. The $i$-th segment of the $\lambda$-unimodal permutation is unimodal and comprised of the $\lambda_i$ elements determined by this partition. Since it is unimodal, it is enough to say which elements lie to the right of the maximum. Let $M\subseteq[n]$ be the set of $k$ elements that are the maximum in their part. Choose $d$ of the elements in $[n]\setminus M$ to lie to the right of the maximum in each part. There are ${n-k \choose d}$ ways for this to be done.
\end{proof}

For example, suppose that  $\lambda = (3,5,1)$ and the number of descents is $d=3$. If we take the partition of $[n]$ to be $\{\{1,6,8\},\{2,4,5,7,9\},\{3\}\}$, then $M=\{8,9,3\}$. If we choose our three elements from $[n]\setminus M$ to be $2,6,$ and 7, then we obtain the permutation $186459723$, which is a $(3,5,1)$-unimodal permutation with three $\lambda$-descents.

The next corollary follows immediately from Proposition \ref{enumerate lambda descents}. 

\begin{cor}\label{enumerate lambda}
For $\lambda=(\lambda_1,\lambda_2, \ldots, \lambda_k) \vDash n$, the number of $\lambda$-unimodal permutations in $\S^\lambda$ is 
	\[{n \choose \lambda}2^{n-k}.\]
	\end{cor}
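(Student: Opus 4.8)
The plan is to derive the count simply by summing the formula of Proposition~\ref{enumerate lambda descents} over all admissible values of the number $d$ of $\lambda$-descents. First I would note that in any $\pi\in\S^\lambda$ the $i$-th segment has length $\lambda_i$, so it contributes at most $\lambda_i-1$ interior positions that can be $\lambda$-descents; hence $\des_\lambda(\pi)$ ranges over $\{0,1,\ldots,n-k\}$, where $n-k=\sum_{i=1}^k(\lambda_i-1)$. (In fact the exact range is immaterial, since $\binom{n-k}{d}$ vanishes for $d<0$ or $d>n-k$.) Because every $\lambda$-unimodal permutation has a well-defined number of $\lambda$-descents, the sets counted by Proposition~\ref{enumerate lambda descents} for distinct $d$ are disjoint and their union is all of $\S^\lambda$. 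Therefore
\[
|\S^\lambda|=\sum_{d\geq 0}\binom{n}{\lambda}\binom{n-k}{d}=\binom{n}{\lambda}\sum_{d=0}^{n-k}\binom{n-k}{d}=\binom{n}{\lambda}2^{n-k},
\]
where the last equality is the binomial theorem applied to $(1+1)^{n-k}$.

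There is no genuine obstacle here: all the combinatorial content is already contained in Proposition~\ref{enumerate lambda descents}, and the corollary is just the specialization obtained by forgetting the statistic $d$. The only point requiring a word of care is the bookkeeping — the disjointness of the classes indexed by $d$ and the identification of the full range of $d$ — both of which are immediate from the definition of $\des_\lambda$.

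As an alternative (and arguably cleaner) approach, one can give a direct construction paralleling the proof of Proposition~\ref{enumerate lambda descents} but without tracking $d$: first choose the ordered set partition of $[n]$ into blocks of sizes $\lambda_1,\ldots,\lambda_k$, which can be done in $\binom{n}{\lambda_1,\lambda_2,\ldots,\lambda_k}$ ways; then, within each block, the maximum element is forced to sit at the peak, and for each of the remaining $\lambda_i-1$ elements we independently decide whether it lies to the left or to the right of that peak, giving $2^{\lambda_i-1}$ unimodal arrangements of the $i$-th block. Multiplying over the $k$ blocks yields $\prod_{i=1}^k 2^{\lambda_i-1}=2^{n-k}$, and this recovers the stated formula directly.
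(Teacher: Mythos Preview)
Your proposal is correct and matches the paper's approach: the paper simply states that the corollary ``follows immediately from Proposition~\ref{enumerate lambda descents},'' and your summation over $d$ together with the binomial identity $(1+1)^{n-k}$ is exactly the one-line justification the paper leaves implicit. The alternative direct construction you sketch is also fine and is essentially the proof of Proposition~\ref{enumerate lambda descents} with the $d$-statistic suppressed.
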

	
	In our proof of Theorem \ref{regular rep}, we show that the number of $\lambda$-unimodal permutations with an even number of $\lambda$-descents minus the number of $\lambda$-unimodal permutations with an odd number of $\lambda$-descents coincides with the character of the regular representation on conjugacy class $\lambda$. 
	
	\begin{proof}[Proof of Theorem \ref{regular rep}]
	First, notice that when $\lambda=(1,1,\ldots,1)$, there are exactly zero $\lambda$-descents. Also, every permutation is trivially a $\lambda$-unimodal permutation, so in the case where $\lambda = (1,1,\ldots,1)$, we have
	$$\sum_{\pi\in\S^\lambda} (-1)^{\des_\lambda(\pi)} = \sum_{\pi\in\S_n} 1=n!.$$
	
	Now if $\lambda\neq(1,1,\ldots,1)$, then we must have $n-k>0$ and thus,
	$$\sum_{\pi\in\S^\lambda} (-1)^{\des_\lambda(\pi)} ={n \choose \lambda} \sum_{d=0}^{n-k} (-1)^d{n-k \choose d} = {n \choose \lambda} (1-1)^{n-k}=0. $$
	Thus the alternating sum is $n!$ when $\lambda = (1, 1, \ldots, 1)$ and is 0 otherwise, which exactly coincides with the character of the regular representation.
	\end{proof}

\section{$\lambda$-unimodal involutions}\label{sec3}

 In this section, we let $\Lambda_k$ denote the set of integer compositions into $k$ positive integer parts and let $\lambda=(\lambda_1, \lambda_2, \ldots, \lambda_k)\in \Lambda_k$. Let $\x$ denote the set of indeterminates $\{x_1, x_2, \ldots, x_k\}$. If $F$ is a generating function on variables $\{x_{i_1}, x_{i_2}, \ldots, x_{i_j}\}$, we write $F(\x_V)$ where $V=\{i_1,i_2, \ldots, i_j\}\subseteq\{1,2,\ldots,k\}$. It is occasionally useful to use alternative notation. If $F$ is a generating function on variables $\x \setminus\{x_{i_1}, x_{i_2}, \ldots, x_{i_j}\}$, we also write $F(\x; \hat{x}_{i_1}, \hat{x}_{i_2}, \ldots, \hat{x}_{i_j})$. 
For example, if $k=3$, then $\x=\{x_1, x_2,x_3\}$ and $F(\x_{\{1,3\}})$, or $F(\x; \hat{x}_2)$, is a function on variables $x_1$ and $x_3$ only. 

Let $x^\lambda$ denote the monomial $x^\lambda=x_1^{\lambda_1}x_2^{\lambda_2}\cdots x_k^{\lambda_k}$. 
Define the generating function for $\lambda$-unimodal involutions with $\lambda \in \Lambda_k$ as 
$$L^k(\x) = \sum_{\lambda \in \Lambda_k}|\Inv^\lambda| x^\lambda.$$
Then we have the following theorem.
\begin{theorem}\label{main theorem}
We have $L^0(\x)=1$, $L^1(\x) = \dfrac{x_1}{(1-x_1)^2}$ and for $k \geq 2$,
\begin{flalign*}
L^k(\x)&= \hspace{-.45cm}\quad\frac{(1+x_1^2)\sum_{i=0}^{k-1} 2^{k-i-1} \sum_{V}L^i(\x_V)\prod_{j\not\in V} x_1x_j\prod_{j\in V}(1+x_1x_j)^2}{2(1-x_1)^2\prod_{\ell=2}^k(1-x_1x_\ell)^2} - \frac{L^{k-1}(\x; \hat{x}_1)}{2}
\end{flalign*}
where the second summation occurs over all subsets $V$ of $\{2, 3, \ldots, k\}$ of size $i$.
\end{theorem}

To prove Theorem~\ref{main theorem}, we start with a lemma establishing the initial condition and an observation about unimodal involutions. For $\pi \in \S_n$ with $\pi=\pi_1\pi_2\ldots\pi_n$ and $\sigma \in \S_m$ with $\sigma=\sigma_1\sigma_2\ldots\sigma_m$, define $\pi \oplus \sigma \in \S_{n+m}$ by
	\[\pi \oplus \sigma = \pi_1\pi_2\ldots\pi_n(\sigma_1+n)(\sigma_2+n)\ldots(\sigma_m+n).\]
For example, if $\pi=312 \in \S_3$ and $\sigma=635421\in \S_6$, then $\pi \oplus \sigma=312968754 \in \S_9$.

\begin{lemma}\label{lem:L1}
If $n\geq 1$, then there are $n$ unimodal involutions in $\S_n$. Consequently, the generating function $L^1(x) =\displaystyle \sum_{n=1}^\infty\sum_{\pi\in\Inv^{(n)}}\hspace{-5pt} x^n$ is given by $$L^1(x) = \frac{x}{(1-x)^2}.$$ Furthermore, 
\begin{itemize}
\item these are exactly the permutations $\iota_j \oplus \delta_{n-j}$, where $\iota_j$ is the increasing (identity) permutation of length $j$ for $0\leq j \leq n-1$ and $\delta_{n-j}$ is the decreasing permutation of length $n-j$; and
\item these are exactly the unimodal permutations in $\S_n$ whose inverse is also unimodal.
\end{itemize}
\end{lemma}

\begin{proof}
Clearly there is only one unimodal permutation of length 1 and it is an involution. We proceed by induction. For any $\pi\in \Inv^{(n)}$ with $n\geq 2$, either $\pi_1=1$ or $\pi_n=1$. Notice that if $\pi_1=1$, then $\pi\in \Inv^{(n)}$ if and only if $\pi=1\oplus \sigma$ with $\sigma \in \Inv^{(n-1)}$. If $\pi_n=1$, then necessarily $\pi_1=n$, and thus $\pi$ is the decreasing permutation that is indeed an involution. Therefore, $|\Inv^{(n)}|=|\Inv^{(n-1)}|+1$, which in turn implies that $|\Inv^{(n)}|=n$, and thus the enumerative result follows. Clearly, these $n$ distinct involutions are the permutations of the form $\iota_j \oplus \delta_{n-j}$ for $0\leq j\leq n-1$.

The second observation follows from Proposition $16^*$ in \cite{SS1985} by noting that a permutation in $\pi \in \S_n$ is unimodal with a unimodal inverse exactly when it avoids the classical patterns $213, 312,$ and $231$. However, we can also directly prove this fact. Let $\pi\in\S_n$ be unimodal with $\pi_k=n$. If it's inverse is also unimodal, it must be the case that the subsequence $\alpha$ of $\pi$ given by taking the elements less than $\pi_n$ is an increasing sequence (because the graph of $\pi^{-1}$ can be obtained by reflecting the graph of $\pi$ about the diagonal). Similarly, the subsequence $\beta$ of $\pi$ obtained by taking elements greater than or equal to $\pi_n$ is a decreasing sequence. Since $\pi$ is unimodal we know that the sequence $\pi_k\ldots \pi_n$ is decreasing and in particular for any $k\leq i \leq n$, we have $\pi_i\geq\pi_n$. Therefore the increasing sequence $\alpha$ contains no elements from $\pi_k\ldots \pi_n$. Therefore, $\beta$ must include all of the sequence $\pi_k\ldots \pi_n$, and cannot include element $\pi_i$ with $i<k$ since $\pi_k=n$. This implies that the number of unimodal permutations that have a unimodal inverse is $n$, and so they must be those described in this lemma. 
\end{proof}

In the remainder of this section, for a permutation $\pi \in \Inv^\lambda$ with $\lambda\in\Lambda_k$, let us write $\pi=\beta_1\beta_2\ldots\beta_k$ where $|\beta_i|=\lambda_i$. Then $\beta_i$ is unimodal for each $i$. Furthermore, we can write $\beta_1$ as $\beta_1=\gamma_1\gamma_2\ldots\gamma_k\eta_k\ldots\eta_2\eta_1$ where $\gamma_i$ is an increasing sequence, $\eta_i$ is a decreasing sequence, and elements of $\gamma_i$ and $\eta_i$ lie between $\lambda_1+\cdots + \lambda_{i-1}+1$ and $\lambda_1+\cdots + \lambda_{i-1} +\lambda_i.$  We denote $\alpha_i:=\gamma_i\eta_i$.
For example, consider the permutation $\pi = 476183259$ as a $(4,3,2)$-unimodal involution (pictured in Figure~\ref{fig:EXAMPLES}(b)). Then $\beta_1 = 4761$, $\beta_2=832$, and $\beta_3=59$. In this case, $\alpha_1 = 41$, $\alpha_2=76$, and $\alpha_3$ is empty. As another example, consider $\pi = 127865349$ as a $(6, 3)$-unimodal involution. In this case, $\beta_1 = 127853$, $\beta_2 = 349$, $\alpha_1 = 1265$, and $\alpha_2 = 78$. 

We also denote by $\bar{\alpha_i}$ the subsequence of $\beta_i$ consisting of elements that are less than or equal to $\lambda_1$. Since $\beta_i$ is unimodal for all $i$, $\bar{\alpha_i}$ must be comprised of (at most) two contiguous segments, one increasing and one decreasing. Since $\pi$ is an involution, $|\alpha_i|=|\bar{\alpha}_i|$ and moreover, $\alpha_i$ and $\bar{\alpha_i}$ are reflections of each other about the diagonal. Since they are both unimodal, by Lemma~\ref{lem:L1}, the are both order-isomorphic to the same unimodal involution. This will be important in the proof of Theorem~\ref{main theorem}.

To give some indication of how the proof will proceed, let us see how this recurrence works for the case $k=2$. For ease of notation, we  set $x:=x_1$ and $y:=x_2$. In this case, the theorem states that 
$$ L^2(x,y) =\frac{(1+x^2)}{2(1-x)^2(1-xy)^2}[2xyL^0 + (1+xy)^2L^1(y)] - \frac{L^1(y)}{2}.$$ 
We can rewrite this as:
$$ L^2(x,y) =\frac{xy(1+x^2)}{(1-x)^2(1-xy)^2}L^0 + \frac{x}{(1-x)^2}L^1(y) + \frac{(1+x^2)}{(1-x)^2}\cdot\frac{1}{2}\left[\frac{(1+xy)^2}{(1-xy)^2}-1\right]L^1(y).$$ 
Note that there are essentially three terms in this sum. These three terms correspond to the following cases, also illustrated in Figure~\ref{fig:L2}.
\begin{enumerate}[(i)]
\item The first term, $\frac{xy(1+x^2)}{(1-x)^2(1-xy)^2}L^0$ corresponds to the case when $\pi_j\leq \lambda_1$ for all $j>\lambda_1$.
\item The second term, $ \frac{x}{(1-x)^2}L^1(y)$ corresponds to the case when $\pi_j\leq \lambda_1$ for all $j\leq \lambda_1$. 
\item The third term, $ \frac{(1+x^2)}{(1-x)^2}\cdot\frac{1}{2}\left[\frac{(1+xy)^2}{(1-xy)^2}-1\right]L^1(y)$ corresponds to other cases. 
\end{enumerate}

%

	\begin{figure}[!h]
	\captionsetup{width=0.88\textwidth}
	\centering
	\resizebox{.88\textwidth}{!}{%
	\begin{tabular}{@{}l@{}}
	\begin{tikzpicture}[scale=.7]
			\newcommand\myx[1][(0,0)]{\pic at #1 {myx};}
			\tikzset{myx/.pic = {\draw 
			      (-2mm,-2mm) -- (2mm,2mm)
			      (-2mm,2mm) -- (2mm,-2mm);}}
				\fill[red!10!white] (6,6) rectangle (9,9);
				\draw[black] (0,0) rectangle (9,9);
				\draw[black] (0,0) rectangle (6,9);
				\draw[black] (0,0) rectangle (9,6);
				\draw[gray] (2,0) rectangle (4,9);
				\draw[gray] (0,2) rectangle (9,4);
				\draw[b] (0,0) -- (6,0) node[l] {\footnotesize $\lambda_1$};
				\draw[b] (6,0) -- (9,0) node[l] {\footnotesize $\lambda_2$};
				\draw [dotted, thick] (.1,.1)--(1.9,1.9){};
				\draw [dotted, thick] (4.1,5.9)--(5.9,4.1){};
				\draw [dotted, thick] (2.1,6.1)--(2.9,7.4){};
				\draw [dotted, thick] (3.0,8.9)--(3.9,7.5){};
				\draw [dotted, thick] (6.1,2.1)--(7.4,2.9){};
				\draw [dotted, thick] (8.9,3)--(7.5,3.9){};
\draw[black] (0,0) rectangle (9,9);
		\end{tikzpicture}
\hspace{12pt}
	\begin{tikzpicture}[scale=.7]
			\newcommand\myx[1][(0,0)]{\pic at #1 {myx};}
			\tikzset{myx/.pic = {\draw 
			      (-2mm,-2mm) -- (2mm,2mm)
			      (-2mm,2mm) -- (2mm,-2mm);}}
				\fill[red!10!white] (0,3) rectangle (3,9);
				\fill[red!10!white] (3,0) rectangle (9,3);
				\draw[black] (0,0) rectangle (9,9);
				\draw[black] (0,0) rectangle (3,9);
				\draw[black] (0,0) rectangle (9,3);
				\draw[b] (0,0) -- (3,0) node[l] {\footnotesize $\lambda_1$};
				\draw[b] (3,0) -- (9,0) node[l] {\footnotesize $\lambda_2$};
				\node at (5.5,5.5) {$L^1(y)$};
				\node at (1.5,1.5) {$L^1(x)$};
\draw[black] (0,0) rectangle (9,9);
		\end{tikzpicture}\hspace{12pt}
		\begin{tikzpicture}[scale=.7]
			\newcommand\myx[1][(0,0)]{\pic at #1 {myx};}
			\tikzset{myx/.pic = {\draw 
			      (-2mm,-2mm) -- (2mm,2mm)
			      (-2mm,2mm) -- (2mm,-2mm);}}
				\fill[red!10!white] (4,4) rectangle (9,9);
				\fill[white] (5,5) rectangle (7,7);
				\fill[red!10!white] (0,5) rectangle (5,7);
				\fill[red!10!white] (5,7) rectangle (7,9);
				\fill[red!10!white] (7,5) rectangle (9,7);
				\fill[red!10!white] (5,0) rectangle (7,5);
				\draw[black] (0,0) rectangle (9,9);
				\draw[black] (0,0) rectangle (4,9);
				\draw[black] (0,0) rectangle (9,4);
				\draw[gray] (0,5) rectangle (9,7);
				\draw[gray] (5,0) rectangle (7,9);
				\draw[gray] (1,0) rectangle (3,9);
				\draw[gray] (0,1) rectangle (9,3);
				\draw[b] (0,0) -- (4,0) node[l] {\footnotesize $\lambda_1$};
				\draw[b] (4,0) -- (9,0) node[l] {\footnotesize $\lambda_2$};
				\node at (6,6) {$L^1(y)$};
				\draw [dotted, thick] (0.1,0.1)--(.9,.9){};
				\draw [dotted, thick] (1.1,4.1)--(1.7,4.9){};
				\draw [dotted, thick] (1.8,8.9)--(2.9,7.1){};
				\draw [dotted, thick] (4.1,1.1)--(4.9,1.7){};
				\draw [dotted, thick] (8.9,1.8)--(7.1,2.9){};
				\draw [dotted, thick] (3.1, 3.9)--(3.9,3.1){};
\draw[black] (0,0) rectangle (9,9);
		\end{tikzpicture}
				 \end{tabular}%
		 }
		\caption{This figure illustrates the three possibilities for when $k=2$. The shaded-in regions are empty and the dotted lines indicate increasing and decreasing regions. These correspond to cases (i), (ii), and (iii), respectively in the example above.}
		\label{fig:L2}
	\end{figure}

Let us start with case (i). In this case, there are no elements in the top right quadrant of size $(\lambda_2\times\lambda_2)$ of the graph of $\pi$. Therefore each element $\pi_j$ with $j>\lambda_1$ must be less than or equal to $\lambda_1$ in magnitude. Since $\pi$ is an involution and thus symmetric about the diagonal, for each such element $\pi_j$, there will be a corresponding element $j=\pi_{\pi_j}$ in the upper left region of the graph of $\pi$. Therefore, for each $y$ contributed to the generating function (corresponding to an element in $\beta_2$), there is also an $x$ contributed (corresponding to an element in $\beta_1$). Additionally, since the elements $\pi_{\lambda_1+1}\ldots\pi_{\lambda_2}$ must be a unimodal sequence whose reflection about the diagonal is also unimodal, this sequence must be order-isomorphic to a unimodal involution $\alpha_2$ by Lemma~\ref{lem:L1}. The sequence of elements in the lower left $(\lambda_1\times\lambda_1)$ quadrant must be order-ismorphic to a (possibly empty) unimodal involution $\alpha_1$.
Additionally, since the reflection of $\beta_2$ (order-isomorphic to $\alpha_2$) consists of the largest entries in the unimodal sequence $\beta_1$, these entries must be contiguous. They appear either immediately before or immediately after the peak of $\alpha_1$; see Figure~\ref{fig:L2}. 

There are $\lambda_2$ options for the (nonempty) involution $\alpha_2$, and (if $\lambda_1>\lambda_2$), $\lambda_1-\lambda_2$ options for the involution $\alpha_1$. If $\alpha_1$ is nonempty, there are two options for how $\alpha_1$ and $\alpha_2$ fit together in $\beta_1$. 
Therefore, we obtain the term:  
 $$(1+2(x+2x^2+3x^3+4x^4+ \cdots))(xy+2x^2y^2+ 3x^3y^3+4x^4y^4+\cdots) = \frac{(1+x^2)}{(1-x)^2}\cdot\frac{xy}{(1-xy)^2}.$$

In case (ii), we have the direct sum of two involutions and so the generating function must be $L^1(x)\cdot L^1(y)$. Now we only need to consider case (iii).

In this last case, there are some elements $\pi_j>\lambda_1$ with $j>\lambda_1$. In the graph of the permutation, these are in the upper right $(\lambda_2\times\lambda_2)$ quadrant. Since $\beta_2$ is unimodal, the elements in the upper right quadrant must form a contiguous unimodal sequence. Because  $\pi$ is symmetric about the diagonal (being an involution), this sequence must itself be order-isomorphic to a unimodal involution, with generating function given by $L^1(y)$.

The sequence of elements in $\beta_2$ that are less than or equal to $\lambda_1$, denoted $\bar\alpha_2$ (which is nonempty in this case), is unimodal and  the elements of $\beta_2$ that take values greater than $\lambda_1$ must appear immediately to the left or to the right of the peak of $\bar\alpha_2$. This is necessary to guarantee that $\beta_2$ is unimodal. In addition, since the elements of $\bar\alpha_2$ must be less than or equal to $\lambda_1$ in magnitude, and since $\pi$ is an involution and thus symmetric about the diagonal,  there will be corresponding elements in the upper left region of the graph of $\pi$. Thus, $\bar\alpha_2$ is order-isomorphic to a  unimodal involution (by Lemma~\ref{lem:L1}). For each nonempty choice of $\bar\alpha_2$, there are two possibilities, before or after the peak, for where the elements larger than $\lambda_1$ in magnitude will lie. 

Notice that we could have $\alpha_1$ empty or not in this case. If it is nonempty, for every choice of $\alpha_1$, there are two possible places where $\alpha_2$ could go, before or after the peak of $\alpha_1$. Given a choice for $\alpha_2$, there are two more choices for where the elements greater than $\lambda_1$ in magnitude could go in $\beta_2$. 
Taken altogether, we get
$$(1+2(x+2x^2+3x^3+ \cdots))(2xy+4x^2y^2+ 6x^3y^3+\cdots)L^1(y) = \frac{(1+x^2)}{(1-x)^2}\cdot\frac{2xy}{(1-xy)^2}L^1(y),$$
which is equivalent to the generating function written for case (iii).

The main theorem follows by similar reasoning. We proceed by establishing some lemmas. 
Let us start with the term involving $L^0$, corresponding to the case when for all $i>\lambda_1$, we have $\pi_i\leq \lambda_1$. This case will inform what happens for all other terms as well.

\begin{lemma}\label{lem:L0case}
Let $k\geq 2$. 
The generating function for elements of $\Inv^\lambda$ such that  for all $i>\lambda_1$ we have $\pi_i\leq \lambda_1$ is given by
$$
2^{k-2}\cdot\frac{(1+x_1^2)}{(1-x_1)^2}\cdot\prod_{i=2}^k \frac{x_1x_i}{(1-x_1x_i)^2}.
$$
\end{lemma}
\begin{proof}
Let us first notice that the generating function given in the statement of the lemma is actually the product: 
$$
\bigg(1+2\sum_{n\geq1} nx_1^n\bigg)\cdot 2\bigg(\sum_{n\geq1} nx_1^nx_2^n\bigg) \cdot 2\bigg(\sum_{n\geq1} nx_1^nx_3^n\bigg)\cdot\cdots\cdot 2\bigg(\sum_{n\geq1} nx_1^nx_{k-1}^n\bigg)\cdot\bigg(\sum_{n\geq1} nx_1^nx_k^n\bigg).
$$
In this case, since for all $i>\lambda_1$ we have $\pi_i\leq \lambda_1$, the element $\pi_i$ corresponds to an element $i=\pi_{\pi_i}$ (by reflection about the diagonal) that lies in $\beta_1$, i.e., the first $\lambda_1$ elements of $\pi$. It is therefore enough to determine $\beta_1$. 
As before, we let $\alpha_1$ be the subsequence of $\beta_1$ consisting of the elements that are less than or equal to $\lambda_1$, let $\alpha_2$ be the subsequence of $\beta_1$ consisting of the elements that are between $\lambda_1+1$ and $\lambda_1+\lambda_2$, etc. 

In these cases, $\alpha_i$ is nonempty for each $i>1$ (since $\lambda_i$ is strictly positive) but $\alpha_1$ may be empty. By Lemma~\ref{lem:L1}, for each $i$, $\alpha_i$ is order-isomorphic to a unimodal involution, since it is unimodal and its reflection about the diagonal is also unimodal.  Therefore, for each $i>1$, the number of ways to pick $\alpha_i$ (and equivalently $\beta_i$) is given by $L^1(x_1x_i)$. The number of ways to pick $\alpha_1$ is $1+L^1(x_1)$ since $\alpha_1$ could potentially be empty.

Finally, notice that for each $i$, $\alpha_i$ is of the form $\alpha_i=\gamma_i\eta_i$ where $\gamma_i$ is a contiguous increasing segment of $\pi$ and $\eta_i$ is a contiguous decreasing segment of $\pi$, and so either the last element of $\gamma_i$ or the first element of $\eta_i$ is the peak of $\alpha_i$. In order for $\beta_1$ to be unimodal, for each $i<k$ with $\alpha_i$ nonempty there are exactly two options to split $\alpha_i$ into $\gamma_i$ and $\eta_i$: before or after the peak of $\alpha_i$. Thus the result follows.
\end{proof}

In the next lemma, we generalize the previous lemma and consider the cases where there exists some nonempty collection of $\beta_i$'s whose values all lie below $\lambda_1$. 

\begin{lemma}\label{lem:LV}
Let $k\geq 2$ and $V\subsetneq\{2, 3, \ldots, k\}$ with $|V|=i$. Let $\Inv_V^\lambda$ be the set of permutations $\pi \in \Inv^\lambda$, with $\pi=\beta_1\beta_2\ldots\beta_k$ where $|\beta_j|=\lambda_j$, so that $j\not\in V$ if and only if every element of $\beta_j$ is less than or equal to $\lambda_1$. Then the generating function for $\Inv_V^\lambda$ is given by 
$$ 2^{k-i-2} L^i(\x_V)\frac{(1+x_1^2)}{(1-x_1)^2}\prod_{j\not\in V} \frac{x_1x_j}{(1-x_1x_j)^2}\prod_{j\in V}\frac{(1+x_1x_j)^2}{(1-x_1x_j)^2}.$$
\end{lemma}

\begin{proof}
First notice that for each $j\in V$, the term given in the product is $$\frac{(1+x_1x_j)^2}{(1-x_1x_j)^2}=1 + 4\cdot \sum_{n\geq 1} n x_1^nx_j^n$$ and so the generating function given in the statement of the lemma is actually: 
$$ 2^{k-i-2} \bigg(1+2\sum_{n\geq1} nx_1^n\bigg)\cdot\prod_{j\not\in V} \bigg(\sum_{n\geq1} nx_1^nx_j^n\bigg)\cdot\prod_{j\in V}\bigg(1 + 4\cdot \sum_{n\geq 1} n x_1^nx_j^n\bigg) \cdot L^i(\x_V).$$
To determine the elements of $\beta_1$, the proof is similar to that of Lemma~\ref{lem:L0case}, except that in the cases where $j\in V$, $\alpha_j$ can be empty. In other words, $\beta_j$ does not necessarily have any elements that are less than or equal to $\lambda_1$ in value. If $\beta_j$ does have elements that are less than or equal to $\lambda_1$ in value, denoted $\bar{\alpha}_j$, then there are also two possibilities for where the contiguous unimodal segment of values greater than $\lambda_1$ can appear in the segment $\beta_j$: immediately before or immediately after the peak of $\bar{\alpha}_j$. Thus for each $j<k$ with $j\in V$ and $\bar{\alpha}_j$ nonempty, there are 4 options instead of just 2.

Finally, the elements that appear in the upper right $(\lambda_2+\cdots+\lambda_k)\times(\lambda_2+\cdots+\lambda_k)$ region of the graph of $\pi$ must be order-isomorphic to a $\nu$-unimodal involution where $\nu$ is some composition of length $i$ and so the generating function for that region is given by  $L^i(\x_V)$.
\end{proof}

Finally, let us deal with the case when for all $2\leq i\leq k$, each $\beta_i$ has an element greater than $\lambda_1$. The reason this case is slightly more complicated is that, in this case, we can have $\pi_i<\lambda_1$ for all $i\leq \lambda_1$ (equivalently, $\pi_i>\lambda_1$ for all $i>\lambda_1$).

\begin{lemma}\label{lem:Lk-1case}
Let $k\geq 2$ and let $\hat\Inv^\lambda$ be the set of permutations $\pi \in \Inv^\lambda$, with $\pi=\beta_1\beta_2\ldots\beta_k$ where $|\beta_i|=\lambda_i$, so that there is an element of $\beta_i$ that is greater than $\lambda_1$ for all $2\leq i\leq k$. Then the generating function for $\hat\Inv^\lambda$ is 
$$\frac{1}{2}\bigg[\frac{(1+x_1^2)}{(1-x_1)^2} \cdot \prod_{j=2}^k \frac{(1+x_1x_j)^2}{(1-x_1x_j)^2} - 1\bigg]L^{k-1}(\x; \hat{x}_1).$$
\end{lemma}
\begin{proof}
Let us first notice that this generating function can be written as the sum of the following summands: 
$$
\frac{x_1}{(1-x_1)^2} L^{k-1}(\x;\hat{x}_1)
$$
and 
$$
\frac{(1+x_1^2)}{(1-x_1)^2} \cdot \frac{1}{2}\bigg[\prod_{j=2}^k \frac{(1+x_1x_j)^2}{(1-x_1x_j)^2} - 1\bigg]L^{k-1}(\x; \hat{x}_1).
$$
In the first summand, we count permutations $\pi$ so that for all $2\leq j\leq k$, we have $\alpha_j$ empty, and in the second summand, we count the permutations $\pi$ so that there is some $2\leq j\leq k$ with $\alpha_j$ nonempty. 
In the first case, we have the direct sum of a unimodal involution with a $\nu$-unimodal permutation where $\nu$ is a composition of length $k-1$ and so the generating function is given by the first summand above. 

In the second case, we have 
$$ \bigg(1+2\sum_{n\geq1} nx_1^n\bigg)\cdot \frac{1}{2}\bigg[\prod_{j=2}^k\bigg(1+4\cdot\sum_{n\geq 1} n x_1^nx_j^n\bigg) - 1\bigg]\cdot L^{k-1}(\x; \hat{x}_1).$$ The proof is similar to that of Lemma~\ref{lem:LV}. Here, $\alpha_1$ could be empty (accounting for the 1 in the first factor), or not. Since there is some $\alpha_j$ with $j\geq 2$ that is nonempty, we again have two options for each unimodal involution isomorphic to $\alpha_1$, as in the proof of Lemma~\ref{lem:L0case}.
For each non-empty segment $\bar{\alpha}_j$ with $j\geq 2$, the segment is order-isomorphic to a unimodal involution and there are (usually) four possibilities for where $\bar\alpha_j$ may be split for the inclusion of elements greater than $\lambda_1$ and for where $\alpha_j$ may be split (in $\beta_1$). If $j$ is the largest index with $\alpha_j$ nonempty, then there are only two possibilities overall. Multiplying by 1/2 accounts for this. Finally, we must subtract the possibility that $\alpha_j$ is empty for all $j\geq 2$.
\end{proof}
\begin{proof}[Proof of Theorem~\ref{main theorem}]
The proof of the main theorem is obtained by adding the results from Lemmas \ref{lem:LV} and \ref{lem:Lk-1case} combined with the initial condition given in Lemma~\ref{lem:L1}. 
\end{proof}

\section{$\lambda$-descents and involutions}

We can refine the previous proof, using the indeterminate $t$ to keep track of descents. 
Let $\Inv^\lambda(d)$ denote the set of involutions $\pi\in\Inv^\lambda$ such that $\des_\lambda(\pi) = d$.  Define
$$
L^k(\x,t) = \sum_{\lambda \in \Lambda_k}\sum_{d\geq0}|\Inv^\lambda(d)| x^\lambda t^{d}. $$
 This leads to a refined version of Theorem~\ref{main theorem}. 

\begin{theorem}\label{thm descents}
We have $L^0(\x,t)=1$, $L^1(\x,t) = \dfrac{x_1}{(1-x_1)(1-tx_1)}$ and for $k \geq 2$,
\begin{flalign*}
L^k(\x,t)=&\quad\frac{(1+tx_1^2)\sum_{i=0}^{k-1} (1+t)^{k-i-1} \sum_{V}L^i(\x_V,t)\prod_{j\not\in V} x_1x_j\prod_{j\in V}(1+tx_1x_j)^2}{(1+t)(1-x_1)(1-tx_1)\prod_{\ell=2}^k(1-x_1x_\ell)(1-t^2x_1x_\ell)}\\
& -\frac{L^{k-1}(\x,t;\hat{x}_1)}{1+t}.
\end{flalign*}
where the sum is over all subsets $V$ of $\{2, 3, \ldots, k\}$ with size $i$.
\end{theorem}

Again, let us first show that the initial conditions hold.

\begin{lemma}\label{lem:L1 descents}
If $n\geq 1$, then there are $n$ unimodal involutions in $\S_n$. Consequently, the generating function $L^1(x,t) =\displaystyle \sum_{\pi\in\Inv^{(n)}}\hspace{-5pt} x^nt^{\des(\pi)}$ is given by $$L^1(x,t) = \frac{x}{(1-x)(1-xt)}.$$
\end{lemma}

\begin{proof}
The $n$ unimodal involutions on $[n]$ are exactly of the form  $\iota_j \oplus \delta_{n-j}$  for $0\leq j \leq n-1$ where $\iota_j$ is the increasing permutation of length $j$ and $\delta_{n-j}$ is the decreasing permutation of length $n-j$. Therefore, for each $n \geq 1$ and each $0\leq i \leq n-1$ there is exactly one unimodal involution of length $n$ with $i$ descents. The generating function is given by $$\sum_{n\geq1} (1+t+t^2+\cdots + t^{n-1}) x^{n}$$ which is equivalent to the one given in the statement of the theorem. 
\end{proof}

Let us now provide an analog of Lemma~\ref{lem:LV}. 

\begin{lemma}\label{lem:LV-des}
Let $k\geq 2$ and $V\subsetneq\{2, 3, \ldots, k\}$ with $|V|=i$. Let $\Inv_V^\lambda$ be the set of permutations $\pi \in \Inv^\lambda$, with $\pi=\beta_1\beta_2\ldots\beta_k$ where $|\beta_j|=\lambda_j$, so that $j\not\in V$ if and only if every element of $\beta_j$ is less than or equal to $\lambda_1$. Then the generating function for $\Inv_V^\lambda$ is given by 
$$ (1+t)^{k-i-2} L^i(\x_V,t)\frac{(1+tx_1^2)}{(1-x_1)(1-tx_1)}\prod_{j\not\in V} \frac{x_1x_j}{(1-x_1x_j)(1-t^2x_1x_j)}\prod_{j\in V}\frac{(1+tx_1x_j)^2}{(1-x_1x_j)(1-t^2x_1x_2)}.$$
\end{lemma}

\begin{proof}
We follow the proof of Lemma~\ref{lem:LV}, keeping track of descents along the way. The formula in Lemma~\ref{lem:LV-des} is equivalent to 
\begin{align*}
(1+t)^{k-i-2} \cdot L^i(\x_V,t)&\cdot\bigg(1+(1+t)\sum_{n\geq1} (1+t+t^2+\cdots +t^{n-1})x_1^n\bigg) \\
&\cdot\prod_{j\not\in V} \bigg(\sum_{n\geq1} (1+t^2+t^4+\cdots +t^{2(n-1)})x_1^nx_j^n\bigg) \\
&\cdot\prod_{j\in V}\bigg(1 + (1+t)^2\cdot \sum_{n\geq 1} (1+t^2+t^4+\cdots +t^{2(n-1)}) x_1^nx_j^n\bigg).
\end{align*}
As before, we build a $\lambda$-unimodal involution by constructing $\beta_1$ as a nested sequence of unimodal involutions, $\alpha_i$ for $1\leq i\leq k$. The rest of the involution is given by reflection about the diagonal and $L^i(\x_V,t)$. 

In  this formula, the sum $$\bigg(1+(1+t)\sum_{n\geq1} (1+t+t^2+\cdots +t^{n-1})x_1^n\bigg)$$ corresponds to the segment $\alpha_1$, i.e., those elements $\pi_j$ with $j\leq \lambda_1$ and $\pi_j\leq \lambda_1$. If $\alpha_1$ is nonempty, we multiply by $(1+t)$ to account for whether the remaining elements in $\beta_1$ will occur before or after the peak of $\alpha_1$. If they occur before the peak, a descent is added and if they occur after,  no descents are added. 

For each $j\geq 2$, one also obtains $(1+t^2+t^4+\cdots +t^{2(n-1)})$ as the coefficient of $x_1^nx_j^n$ since any descent of $\alpha_j$ occurs again for $\bar\alpha_j$ (since $\alpha_j$ and $\bar\alpha_j$ are order-isomorphic to the same involution).  For $j\in V$, there is an extra copy of $(1+t)$ to account for where the additional elements of $\beta_j$ go, before or after the peak of $\bar{\alpha}_j$. 
\end{proof}

Finally, we provide an analog of Lemma~\ref{lem:Lk-1case}.

\begin{lemma}\label{lem:Lk-1case-des}
Let $k\geq 2$ and let $\hat\Inv^\lambda$ be the set of permutations $\pi \in \Inv^\lambda$, with $\pi=\beta_1\beta_2\ldots\beta_k$ where $|\beta_i|=\lambda_i$, so that there is an element of $\beta_i$ that is greater than $\lambda_1$ for all $i$. Then the generating function for $\hat\Inv^\lambda$ is 
$$\frac{1}{1+t}\cdot \bigg[\frac{(1+tx_1^2)}{(1-x_1)(1-tx_1)}\cdot\prod_{j=2}^k\frac{(1+tx_1x_j)^2}{(1-x_1x_j)(1-t^2x_1x_j)} - 1\bigg]\cdot L^{k-1}(\x,t; \hat{x}_1).$$
\end{lemma}
\begin{proof}
Let us first notice that this generating function can be written as the sum of the following summands: 
$$
\frac{x_1}{(1-x_1)(1-tx_1)}L^{k-1}(\x,t;\hat{x}_1)
$$
and 
$$
\frac{(1+tx_1^2)}{(1-x_1)(1-tx_1)}\cdot\frac{1}{1+t}\cdot \bigg(\prod_{j=2}^k\frac{(1+tx_1x_j)^2}{(1-x_1x_j)(1-t^2x_1x_j)} - 1\bigg)\cdot L^{k-1}(\x,t; \hat{x}_1).
$$
As in Lemma~\ref{lem:Lk-1case}, in the first summand, we count permutations $\pi$ so that for all $i\leq \lambda_i$, $\pi_i\leq \lambda_1$ and in the second summand, we count those permutations so that there is some $i\leq \lambda_1$ with $\pi_i>\lambda_1$. These formulas follow from the same reasoning as in the proof of Lemma~\ref{lem:LV-des}. 
\end{proof}

\begin{proof}[Proof of Theorem~\ref{thm descents}]
The proof follows directly from Lemmas~\ref{lem:L1 descents}, \ref{lem:LV-des}, and \ref{lem:Lk-1case-des}. 
\end{proof}

We end with an observation about the function $$g(\x,t):=\prod_{j=2}^k(1+tx_1x_j)^2 - \prod_{j=2}^k(1-x_1x_j)(1-t^2x_1x_j),$$ which can be proven either by induction or by  noticing that $g(\x,-1) = g'(\x,-1)=0$ (where the derivative is with respect to the variable $t$). 
\begin{lemma}\label{lem:g}
For $k\geq 2$,  $$\frac{g(\x,t)}{(1+t)^2} = \frac{1}{(1+t)^2}\left[\prod_{j=2}^k(1+tx_1x_j)^2 - \prod_{j=2}^k(1-x_1x_j)(1-t^2x_1x_j)\right]$$ is a polynomial in $\x$ and $t$.
\end{lemma}

This lemma will allow us to evaluate the generating function $L^k(\x, t)$ at $t=-1$ in the next section in order to obtain a generating function for the Gelfand character. 

\section{The Gelfand character}

In this section, we state the main theorem of the paper. Define
$$G^k(\x) = \sum_{\lambda\in \Lambda_k} \chi_\lambda^G x^\lambda.$$ where $\chi^G$ is the Gelfand character mentioned in the introduction, $\Lambda_k$ is the set of integer compositions of length $k$, and $\chi_\lambda^G$ is the value the character takes on the conjugacy class given by $\lambda$. 

Notice that by Equation~\eqref{eq:char}, $$L^k(\x,-1) = \sum_{\lambda\in\Lambda_k} \chi_\lambda^G x^\lambda,$$ 
 This provides us with a way of computing the generating function for the Gelfand character.
In particular, $G^k(\x)$ can be computed recursively.  

\begin{theorem}\label{main theorem 3}
We have $G^0(\x)=1$, $G^1(\x) = \dfrac{x_1}{(1-x_1^2)}$ and for $k \geq 2$,
$$
G^k(\x)=\frac{x_1}{(1-x_1^2)} G^{k-1}(\x; \hat{x}_1)+  \sum_{i=2}^k\frac{x_1x_i}{(1-x_1x_i)^2}G^{k-2}(\x; \hat{x}_1, \hat{x}_i). 
$$

\end{theorem}

\begin{proof}
First let us notice that in the statement of Theorem~\ref{thm descents}, the term $(1+t)$ is a factor of each summand involving $L^i(\x_V, t)$ for each $0\leq i\leq k-3$, so we can evaluate these terms at $t=-1$ and get zero. 
For $i=k-2$, the copy of $(1+t)$ in the numerator  will cancel with the copy of $(1+t)$ in the denominator, so we can evaluate at $t=-1$ and get:
\begin{align*}
\quad&\frac{(1-x_1^2) \prod_{j=2}^k(1-x_1x_j)^2}{(1-x_1)(1+x_1)\prod_{\ell=2}^k(1-x_1x_\ell)^2}\sum_{i=2}^kL^{k-2}(\x,-1;\hat{x}_1,\hat{x}_i)\frac{x_1x_i}{(1-x_1x_i)^2}\\
\quad&\quad\quad\quad=\sum_{i=2}^k\frac{x_1x_i}{(1-x_1x_i)^2}G^{k-2}(\x; \hat{x}_1, \hat{x}_i).
\end{align*}
Finally, let us consider the terms involving $L^{k-1}(\x_V,t)$ in Theorem~\ref{thm descents}:
$$
\frac{(1+tx_1^2)L^{k-1}(\x,t;\hat{x}_1)\prod_{j=2}^k(1+tx_1x_j)^2}{(1+t)(1-x_1)(1-tx_1)\prod_{\ell=2}^k(1-x_1x_\ell)(1-t^2x_1x_\ell)} -\frac{L^{k-1}(\x,t;\hat{x}_1)}{1+t}.
$$
Recall the notation 
$$
g(\x,t):=\prod_{j=2}^k(1+tx_1x_j)^2 - \prod_{j=2}^k(1-x_1x_j)(1-t^2x_1x_j)
$$
from the end of the previous section. Using $(1+tx_1^2) - (1-x_1)(1-tx_1) = (1+t)x_1$, the above expression can be written as
$$
\frac{(1+tx_1^2)g(\x,t)+x_1(1+t)\prod_{\ell=2}^k(1-x_1x_\ell)(1-t^2x_1x_\ell)}{(1+t)(1-x_1)(1-tx_1)\prod_{\ell=2}^k(1-x_1x_\ell)(1-t^2x_1x_\ell)}L^{k-1}(\x,t;\hat{x}_1).
$$
By Lemma \ref{lem:g}, $\frac{g(\x,t)}{(1+t)^2}$ is a polynomial. The substitution $t=-1$ will therefore annihilate the term involving $g(\x,t)$ and the expression will reduce to
$$
\frac{x_1\prod_{\ell=2}^k(1-x_1x_\ell)^2}{(1-x_1)(1+x_1)\prod_{\ell=2}^k(1-x_1x_\ell)^2}L^{k-1}(\x,-1;\hat{x}_1) = \frac{x_1}{(1-x_1^2)} G^{k-1}(\x; \hat{x}_1).
$$
This completes the proof.
\end{proof}

We can solve this recurrence to obtain the following generating function for the Gelfand character. Let $\mathbb{P}_k^2$ be the set of all partitions of the set $[k]=\{1,2,\ldots,k\}$ into subsets of sizes 1 and 2. For example, $\mathbb{P}_3^2$ contains
$$\{\{1\}, \{2\}, \{3\}\}, \quad \{\{1\}, \{2,3\}\}, \quad \{\{2\}, \{1,3\}\}, \quad \{\{3\}, \{1,2\}\},$$  and the set $\mathbb{P}_4^2$ contains 10 elements.
\begin{cor}\label{cor:Gelfand}
For $k\geq 1$, 
$$
G^k(\x) =\prod_{i=1}^n x_i\sum_{P\in\mathbb{P}_k^2} \prod_{\{i\} \in P} \frac{1}{(1-x_i^2)} \prod_{\{\ell,m\} \in P}\frac{1}{(1-x_\ell x_m)^2}.
$$
\end{cor}

For example, $$G^2(x,y) = xy\left[\frac{1}{(1-x^2)(1-y^2)} + \frac{1}{(1-xy)^2}\right]$$ and 
\begin{align*}
G^3(x,y,z) = xyz\bigg[&\frac{1}{(1-x^2)(1-y^2)(1-z^2)} + \frac{1}{(1-xy)^2(1-z^2)} \\
&+ \frac{1}{(1-xz)^2(1-y^2)} + \frac{1}{(1-yz)^2(1-x^2)}\bigg].
\end{align*}

Below are the first few terms of $G^k(\x)$ for $k\in \{1,2,3\}$ as computed from the formulas in Theorem \ref{main theorem 3}. 
\begin{align*}
G^1(x)   =& \, \,  x+x^3+x^5+x^7+ x^9 + x^{11} +x^{13} + x^{15}+ x^{17} + x^{19} +x^{21}+ x^{23} + x^{25} +\cdots \\
G^2(x,y) =& \, \,  2xy + xy^3 +2x^2y^2+ x^3y  + xy^5  + 4x^3y^3 +  x^5y  + xy^7 + x^3y^5+4x^4y^4   +\cdots \\
G^3(x,y,z) =& \, \,  4xyz + 2xyz^3 + 2xy^2z^2 +2x^2yz^2 + 2x^2y^2z + 2x^3yz   + 2xy^3z  + 4x^3yz^3+ \cdots \\
\end{align*}

\vspace{-18pt}

Notice that for each $k\geq1$, $G^k(\x)$ is symmetric in its $k$ variables. This follows from Corollary~\ref{cor:Gelfand}, but also from the observation that Equation \eqref{eq:char} holds for any ordering of the composition $\lambda$. Therefore, if we would like to compute $\chi^G_{(1,1,3)}$, we can take either the coefficient of $xyz^3$, $xy^3z$, or $x^3yz$ in $G^k(\x)$ as our answer. In each case, we find that $\chi^G_{(1,1,3)}=2$.

We can also obtain the following direct corollary of Corollary~\ref{cor:Gelfand}. 

\begin{cor}
Let $k\geq 1$ and $\lambda=(\lambda_1, \lambda_2,\ldots,\lambda_k)$. Then $\chi_\lambda^G$ is nonzero if and only there are an even number of copies of $i$ in $\lambda$ for each even number $i$. 
\end{cor}

That is, $\chi_\lambda^G\neq0$ if and only if the even components of the composition $\lambda$ come in pairs. We can see this is true since we start with the product of all variables (and so we start with each element appearing an odd number of times). Then any copy of $\frac{1}{(1-x_i^2)}$ will not change the parity of the exponents of any $x^i$. The only way to change  parity is with a copy of $\frac{1}{(1-x_\ell x_m)}$ in which case this change comes in pairs of equal exponents.





It remains to 
compare this method of computation with known methods (as in \cite{M-1937, N1940, R1997}). One thing that appears different about this computation is that the Gelfand character is computed on the $k$-th iteration for all $n$ so that $\lambda$ has $k$ parts. 



\section*{Acknowledgements} The authors would like to thank the University of Texas at Tyler's Office of Sponsored Research and Center for Excellence in Teaching and Learning for their support in conducting this research. The awards from these offices supported the research conducted for this paper by two faculty members, Kassie Archer and L.-K. Lauderdale, together with undergraduate student Marin King and graduate student Virginia Germany. We would also like to thank UT Tyler students Angela Gay, Thomas Lupo, and Francesca Rossi for their contributions to Proposition \ref{enumerate lambda descents} as part of a class project.

Finally, we would also like to thank the anonymous referees for their helpful comments and suggestions. 

\printbibliography

\end{document}